\DeclareMathOperator{\grad}{\mathrm{grad}}
\DeclareMathOperator{\Hess}{\mathrm{Hess}}
\DeclareMathOperator{\Supp}{\mathrm{Supp}}
\DeclareMathOperator{\Id}{\mathrm{Id}}
\newcommand{\E}{\mathbb{E}}
\renewcommand{\L}{\mathcal{L}}
\renewcommand{\H}{\mathcal{H}}
\newcommand{\R}{\mathbb{R}}
\renewcommand{\S}{\mathcal{S}}
\newcommand{\V}{\mathcal{V}}
\newcommand{\norm}[1]{\left\lVert#1\right\rVert}
\numberwithin{equation}{section}
\newtheorem{theorem}{Theorem}[section]
\newtheorem{proposition}[theorem]{Proposition}
\newtheorem{corollary}[theorem]{Corollary}
\newtheorem{assumption}[theorem]{Assumption}
\newtheorem{definition}[theorem]{Definition}
\newtheorem{remark}[theorem]{Remark}
\begin{document}

\title{Invariance principle for Lifts of Geodesic Random Walks}
\author{Jonathan Junn\'e} \author{Frank Redig} \author{Rik Versendaal}
\thanks{TU Delft, Delft Institute of Applied Mathematics, Mekelweg 4, 2628 CD Delft, Netherlands.\newline Emails: \textsf{j.junne@tudelft.nl} \textsf{f.h.j.redig@tudelft.nl} \textsf{r.versendaal@tudelft.nl}}

\begin{abstract}
    We consider a certain class of Riemannian submersions $\pi : N \to M$ and study lifted geodesic random walks from the base manifold $M$ to the total manifold $N$. Under appropriate conditions on the distribution of the speed of the geodesic random walks, we prove an invariance principle; i.e.,  convergence to horizontal Brownian motion for the lifted walks.  This gives us  a natural probabilistic proof of the geometric identity relating the horizontal Laplacian $\Delta_\H$ on $N$ and the Laplace-Beltrami operator $\Delta_M$ on $M$. In particular, when $N$ is the orthonormal frame bundle $O(M)$, this identity is central in the Malliavin-Eells-Elworthy construction of Riemannian Brownian motion.
\end{abstract}
\keywords{ Invariance principle, geodesic random walks, horizontal Laplacian, Riemannian Brownian motion, Riemannian submersions}
\maketitle

\section{Introduction}
In this paper we consider geodesic random walks on a Riemannian manifold $(M,g)$ and consider their horizontal lift into
a manifold $(N,\tilde{g})$ such that there is a Riemannian submersion $\pi: N\to M$.
A motivating example of this setting is the orthonormal frame bundle $\pi_{O(M)} : O(M) \to M$ of a Riemannian manifold.
This example is the basis of the Malliavin-Eells-Elworthy construction of Brownian motion. The important point in this setting is that the horizontal Brownian motion has as a generator the horizontal Laplacian which is a sum of squares of globally defined vector fields; i.e., it is in \emph{H\"{o}rmander form}
\begin{equation*}
    \Delta_{\H}= \sum_{i=1}^d H_i^2,
\end{equation*}
where $d$ is the dimension of the manifold.
Because of this, the Markov process generated by $\Delta_{\H}$ can be constructed as the solution of a Stratonovich SDE \cite{Hsu02} driven by an
$\R^d$-valued Brownian motion.
Then, the Brownian on the manifold is the projection of this horizontal Brownian motion. This is based on the fact that
\begin{equation}\label{horpro}
\Delta_{\H} (f\circ \pi) = (\Delta_M f) \circ \pi
\end{equation}
for all smooth $f:M\to\R$. The proof of identity \eqref{horpro} in \cite{Hsu02} is based on an explicit somewhat involved computation.
Horizontal Brownian motion is extensively studied in Baudoin's monograph \cite{BDW22}.

Brownian motion on $M$ can  be obtained as a scaling limit of geodesic random walks as initially considered by J\o rgensen \cite{Jor75}. It is therefore natural to lift these walks horizontally in order to obtain horizontal Brownian motion in the scaling limit. As a consequence of such a weak converge result, the horizontal Brownian motion on the total manifold $N$ and the Brownian motion on the base manifold $M$ are then $\pi$-related automatically.
It is precisely the aim of our paper to prove this result for a class of geodesic random walks, in the setting of Riemannian submersions.
We start in section 3 by proving this invariance principle, and its corollary \eqref{horpro} for the orthonormal frame bundle; i.e., the context of \cite{Hsu02}. Then in section 4 we consider general submersions where we prove the same result, and provide several examples.

\section{Random walks and horizontal random walks}
In this section we introduce the stochastic processes we study, namely, horizontal random walks. To do so, we first introduce the analogue of random walks in $M$, so-called geodesic random walks, following \cite{Jor75,Ver19}. Afterwards, we explain how these geodesic random walks can be lifted to the total space $N$ along a Riemannian submersion $\pi:N \to M$.
\subsection{Geodesic random walk}
We consider a $d$-dimensional geodesically complete Riemannian manifold $M$ with metric $g$, and denote by $T_pM$ the tangent space of $M$ at $p\in M$.
In order to describe increments of our random walks, we have to consider a collection of
probability measures $\mu_p$ on $T_p M$.
We then say that $\mu_p$ is \emph{measurable} (or \emph{continuous}, \emph{smooth}) as a function of $p$ if for every smooth coordinate system $x$ about $p$, the associated family of measures
on $\R^d$  (where we use $\{\partial/\partial x^i\}_{1\le i\le d}$ as a basis for $T_pM$ to identify $T_pM$ with $\R^d$) is measurable (or continuous, smooth).
By smoothness of the transition maps, if this holds for one smooth coordinate system, then it holds for all smooth coordinate systems.

Such a collection of probability measures $\mu_p$ on $T_p M$, $p\in M$, depending in a measurable way on $p\in M$, is called a distribution of \emph{increments}.

The nomenclature \emph{increment} is inspired from \cite{Jor75} where $\mu_p$ describes the direction in which the random walk follows
a geodesic when starting from $p$.
More precisely, we define the following Markov processes based on $\{\mu_p\}_{p \in M}$:
\begin{definition}
\begin{enumerate}
\item The \emph{discrete-time unit speed random walk} $\{S_k\}_{ k\in\mathbb{N}}$ based on $\{\mu_p\}_{p \in M}$ is defined via its transition operator
\begin{equation}\label{transop}
Pf(p) := \E \left[f\left(S_{k+1}\right)|S_k=p\right]= \int_{T_pM} f(\exp_p(v)) \: \mu_p(dv);
\end{equation}
\item The \emph{discrete-time random walk with speed $\alpha$} based on $\{\mu_p\}_{p \in M}$
is denoted by $\{\S^{(\alpha)}_k\}_{ k\in\mathbb{N}}$ and
is defined via its transition operator
\begin{equation}\label{transopa}
P^{(\alpha)}f(p) := \E \left[f\left(S^{(\alpha)}_{k+1}\right)|S^{(\alpha)}_k=p\right] = \int_{T_pM} f(\exp_p(\alpha v)) \:\mu_p(dv);
\end{equation}
\item Finally, the continuous time process $\{Z^{(\alpha)}\}_{t\geq 0}$ is defined via its generator.
\begin{equation}\label{genalpha}
L^{(\alpha)} f(p) := \alpha^{-2}\left(P^{(\alpha)}f(p) - f(p)\right)
\end{equation}
\end{enumerate}
\end{definition}
The process $\{S_k\}_{k\in\mathbb{N}}$ evolves as follows: whenever $S_k=p$, $S_{k+1}$ is obtained
by randomly choosing $X_{k+1}$ on $T_pM$ according to the measure $\mu_p$ and following the geodesic starting at $p$ in
the direction $X_{k+1}$ for time $1$, and analogously for the walk with speed scaled by $\alpha$.

In what follows, we want to  prove weak convergence to (horizontal) Brownian motion for the continuous walk $\{Z^{(\alpha)}_t\}_{t\geq 0}$ and its horizontal lift (defined below)
as $\alpha$ tends to zero.
This will then imply immediately the same weak convergence 
results for the discrete walk $\{S^{(\alpha)}_{\lfloor\alpha^{-2}t\rfloor}\}_{t\ge 0}$ as $\alpha$ tends to zero.

In order to proceed, we need some conditions on the distribution of increments.
Because we aim at proving convergence to Brownian motion, there is a centering and variance condition.
Finally, in order to prove uniform convergence of generators, it is convenient to have an additional third moment condition.
More precisely, we make the following assumptions:
\begin{assumption}[Centering and covariance]\label{assumption:unit_variance}
    For every $p \in M$, the measure $\mu_p$ has zero expectation and its  covariance equals the inverse metric; i.e.,
    \begin{equation*}
        \int_{T_pM} v \: \mu_p(dv) = 0, \quad \int_{T_pM} v \otimes v \: \mu_p(dv) = g^{-1}(p),
    \end{equation*}
    or equivalently, in any smooth coordinates system about $p$,
    \begin{equation}\label{equation:expectation_variance}
        \int_{T_pM} v^i \: \mu_p(dv) = 0, \quad \int_{T_pM} v^iv^j \:\mu_p(dv) = g^{ij}(p), \quad i,j = 1, \dots, d.
    \end{equation}
\end{assumption}
\begin{assumption}[Third moment condition]\label{assumption:third_moment}
    The third moment of the collection of measures $\{\mu_p\}_{p\in M}$ is finite, uniformly on compacts; i.e., for
    all $K\subset\subset M$ compact,
    \begin{equation*}\label{equation:third_moment}
        \sup_{p\in K}\int_{T_pM} \norm{v}^3 \: \mu_p(dv) < +\infty, \quad
    \end{equation*}
\end{assumption}
\begin{remark}
If $p\mapsto \mu_p$ is invariant under parallel transport, which is the condition considered in \cite{Jor75} in order to mimic identically distributed increments, then if assumptions \ref{assumption:unit_variance} and \ref{assumption:third_moment} are satisfied for a single point $p\in M$, then they are for all points $p\in M$.
\end{remark}

\subsection{Horizontal lift of geodesic random walks}
Now that we have defined geodesic random walks on the base manifold $(M, g)$, we can construct a new process on the total manifold $(N, \tilde g)$ carrying a metric $\tilde g$ that will be specified later on. In order to define this process, we recall some terminology from Riemannian geometry.

\begin{definition}
    A \emph{Riemannian submersion} $\pi : (N,\tilde g) \to (M, g)$ is a smooth surjective map whose differential is an isomorphism
    \begin{equation*}
        d\pi_u : \left(\ker d\pi_u\right)^\perp \to T_{\pi(u)}M
    \end{equation*}
    which is also an isometry.  Here $\perp$ denotes the orthogonal complement with respect to the metric $\tilde g$ in $N$.
\end{definition}
In the setting of Riemannian submersions, 
the tangent space $T_uN$ of the total manifold $N$ at a point $u\in N$ splits into the \emph{vertical} and \emph{horizontal subspaces} as follows:
    \begin{equation*}
        \V_u N := \ker d\pi_u, \quad \H_u N := (\V_u N)^\perp, \quad T_u N = \H_u N \oplus \V_u N.
    \end{equation*}
Their disjoint unions form two subbundles of $TN$ denoted respectively by $\V N = \sqcup_{u\in N} \V_u N$ and $\H N = \sqcup_{u\in N} \H_uN$. This splits the metric $\tilde g$ on $TN$ into its two factors $g_{\V N}$ and $g_{\H N}$. A horizontal vector field $X \in \Gamma(\H N)$ is \emph{$\pi$-related} to a vector field $\overline{X} \in \Gamma(TM)$ if for any $u \in N$ it holds
    \begin{equation}\label{related}
        d\pi_u\left(X_u\right) = \overline{X}_{\pi(u)}.
    \end{equation}
We stress out that relating manifolds via a Riemannian submersion make sure that horizontal $\pi$-related tangent vectors 
as in \eqref{related} have the same norm because $d\pi_{|_{\H N}}$ is an isometry. 

\begin{remark}
In several situations, the total manifold $N$ comes with a natural projection map $\pi : N \to M$ defining the vertical subspaces $\V_u N = \ker d\pi_u$ but no specification of a metric $\tilde g$. One can then use any connection form $\omega$ to define the horizontal subspaces $\H_u N = \ker \omega_u$. Now, with the help of this choice of horizontal bundle, obtained either by the Riemannian submersion or by the specification of a connection form, one can lift any smooth curve on the base manifold to the total manifold with respect to the horizontal bundle.
\end{remark}

We can now define the horizontal lift of a curve $\gamma: I\to M$. We denote $\gamma'(t)=\tfrac{d}{dt}(\gamma(t))$.

\begin{definition}
    The \emph{horizontal lift} $\tilde{\gamma}$ with respect to $\H N$ starting at $u_0 \in N_{\gamma(0)} = \pi^{-1}(\{\gamma(0)\})$ of a smooth curve $\gamma : I \to M$ is the unique curve satisfying
    \begin{equation}\label{equation:horizontal_lift}
        \pi \circ \tilde{\gamma} = \gamma, \quad \tilde{\gamma}'(t) \in \H_{\tilde{\gamma}(t)}N.
    \end{equation}
\end{definition}
    Similarly, the \emph{horizontal lift} $\tilde v$ with respect to $\H N$ starting at $u$ of a tangent vector $v \in T_{\pi(u)}M$ is given by differentiating (\ref{equation:horizontal_lift});
    \begin{equation*}
        \tilde v = (d\pi_u)^{-1}v \in \H_uN.
    \end{equation*}
If $\tilde{v}$ is the horizontal lift of $v \in T_p M$, then for every $\gamma$ in $M$ such that $\gamma(0)=p$, and
$\gamma'(0)=v$, and for every $u\in N$ such that $\pi(u)=p$, the horizontal lift of $v$, denoted by $\tilde{v}= \tilde{v}[v,u]$ equals $\tilde{\gamma}'(0)$, where $\tilde{\gamma}$ is the horizontal lift of $\gamma$
starting at $\tilde{\gamma}(0)=u$.

We recall that the horizontal lift of a geodesic under a Riemannian submersion is again a geodesic (see for instance \cite[Lemma 26.11]{Mic08}).
It is important to notice that geodesics in $N$ with initial horizontal tangent vector, are horizontal curves; i.e.,
the tangent vectors remain horizontal.
Moreover, by the geodesic property, the tangent vector at any point of the curve is the parallel transport of the initial tangent vector.
\begin{definition}
    Given a distribution of increments $\{\mu_p\}_{ p\in M}$ we define its \emph{horizontal lift} $\{\tilde{\mu}_u\}_{ u\in N}$ as follows: The distribution $\tilde{\mu}_u$ is obtained by first drawing $v$ according to $\mu_{\pi(u)}$ and then lifting $v$ to $\tilde{v}[v,u]$.
    It then follows that the (discrete or continuous-time) random walks based on $\{ \mu_p \}_{ p\in M }$ are horizontally lifted to the (discrete or continuous-time) random walks based on $\{ \tilde{\mu}_u \}_{ u\in N}$, and conversely, the projections of random walks based on $\{ \tilde{\mu}_u \}_{u\in N}$ are distributed as the random walks based on $\{\mu_p\}_{p\in M}$.
\end{definition}
As a consequence, the \emph{horizontal lift of the rescaled continuous-time random walk} $\{Z^{(\alpha)}_t\}_{ t\geq 0 }$ defined via its generator  (\ref{genalpha}) is the process $\{\tilde{Z}^{(\alpha)}_t\}_{ t\geq 0}$ on the total manifold $N$ with generator
defined on smooth compactly supported functions $f:N\to\R$ by
\begin{equation}\label{equation:resclaled_generator}
        \L_\alpha f(u) := \alpha^{-2} \int_{T_{\pi(u)}M} f\left(\exp_u (\alpha\tilde{v}[v,u])\right) - f(u)\: \mu_{\pi(u)}(dv), 
\end{equation}
    
so that $\{\pi(\tilde{Z}^{(\alpha)}_t)\}_{ t\geq 0 }= \{Z^{(\alpha)}_t\}_{ t\geq 0}$ in distribution.

\section{Invariance principle for the orthonormal frame bundle}
We now turn to our main result, namely the invariance principle for horizontal random walks.
Before we state this result in full generality in Section 4, we first consider the special case in which $N$
is the orthonormal frame bundle. The reason for this is two-fold. First of all, the (orthonormal) frame
bundle plays a central role in defining stochastic processes in manifolds by constructing them from their
Euclidean counterparts. This motivated our study of horizontal random walks. Second, considering
the orthonormal frame bundle allows for a more streamlined proof of the invariance principle, therefore
making it more instructive to consider first.
Before we can state the main theorem, we need additional definitions. We start by defining the orthonormal frame bundle.

\begin{definition}
    An \emph{orthonormal frame} $u$ at $p$ is an ordered choice of orthonormal basis $\{ue_i\}_{1\le i\le d}$ of $T_pM$, where $\{e_i\}_{1\le i\le d}$ is the canonical basis of $\R^d$. The set of all orthonormal frames at $p$ is denoted $O_p$ and their disjoint union $O(M) := \sqcup_{p \in M} O_p$ is referred to as the \emph{orthonormal frame bundle}.
\end{definition}
The orthonormal frame bundle $O(M)$ is a manifold of dimension $d(d+1)/2$ that comes with a natural submersion $\pi_{O(M)} : O(M) \to M$ sending any orthonormal frame $u \in O_p$ to the basepoint $p$. If $(U, \{x^i\}_{1\le i\le d})$ is a local chart in $M$ about $p$, we can express the orthonormal basis of $T_pM$ as $ue_i = (ue_i)^j \frac{\partial}{\partial x^j}$, and this gives a local chart $(\pi^{-1}(U), (\{x^k\}_{1\le k\le d}, \{(ue_i)^j\}_{1\le i<j\le d}))$ in $O(M)$ about $u$. It remains to define a splitting of $TO(M)$, for instance, by specifying a notion of horizontality. 
\begin{definition}
    A smooth curve $u : I \to O(M)$ is \emph{horizontal} if for any $e\in \R^d$ the tangent vector field $u(t)e \in T_{\pi(u(t))}M$ is itself parallel with respect to the Levi-Civita connection $\overline{\nabla}$ on $M$ along the curve $\pi \circ u : I \to M$. 
\end{definition}
This notion of horizontality induces the splitting $TO(M) = \H O(M) \oplus \V O(M)$ and allows us to lift smooth curves horizontally. Given a smooth $\gamma : I \to M$ and its horizontal lift $\tilde{\gamma}$ starting at $u$, we recover the parallel transport of tangent vectors along $\gamma$ given by
\begin{equation*}
    \tau_{\gamma; t_1t_2} : T_{\gamma(t_1)} \to T_{\gamma(t_2)} : v \mapsto \tilde{\gamma}(t_2)\left(\tilde{\gamma}(t_1)\right)^{-1}v.
\end{equation*} We can look at the horizontal lifts of the different orthonormal basis $ue_i$ of $T_pM$ induced by the orthonormal frame $u$ at $p$ for each $e_i$.
\begin{definition}
    Let $u$ be an orthonormal frame at $p$. The \emph{canonical horizontal vector fields}
    \begin{equation}\label{equation:fundamental_horizontal}
        H_i(u) := \widetilde{ue_i}, \quad i = 1,\dots,d,
    \end{equation}
    are the horizontal lifts with respect to $\H O(M)$ of the tangent vectors $ue_i \in T_pM$ starting at $u$.
\end{definition}
To find a coordinate expression for these vector fields, consider a horizontal lift $\tilde\gamma : I \to O(M)$ that starts at $u$ with $\gamma'(0) = ue_i$. By definition of horizontal lift with respect to $\H O(M)$,
\begin{equation*}
    H_i(u) = \widetilde{ue_i} = \tilde\gamma'(0) = \dot\gamma^j(0)\frac{\partial}{\partial x^j} +  \left(\left(\tilde{\gamma}e_l\right)^m\right)'(0)\frac{\partial}{\partial(ue_l)^m},
\end{equation*} and since the tangent vectors $\tilde{\gamma}(t)e_l$ are parallel with respect to  $\overline{\nabla}$ on $M$ along the curve $\pi \circ \tilde\gamma : I \to M$ whose initial tangent is vector $ue_i$, the geodesic equation yields
\begin{equation*}
    \left(\left(\tilde{\gamma}e_l\right)^m\right)'(0) = -(ue_i)^j(ue_l)^k\overline{\Gamma}^m_{jk}, \quad 1 \le l < m \le d,
\end{equation*}
where $\{\overline{\Gamma}^k_{ij}\}_{1\le i,j,k\le d}$ denote the Christoffel symbols of $\overline{\nabla}$.
The horizontal and vertical subbundles of $TO(M)$ are thus respectively spanned by (see \cite[Proposition 2.1.3]{Hsu02})
\begin{equation}\label{equation:horizontal}
    H_i(u) = (ue_i)^j \left(\frac{\partial}{\partial x^j} - (ue_l)^k\overline{\Gamma}_{jk}^m\frac{\partial}{\partial (ue_l)^m}\right), \quad i = 1, \dots, d,
\end{equation}
and
\begin{equation}\label{equation:vertical}
    \quad V^k_j(u) := (ue_j)^l\frac{\partial}{\partial (ue_k)^l}, \quad 1 \le j < k \le d.
\end{equation}
A natural choice of metric compatible with this splitting is a Sasaki-Mok type metric introduced in \cite{Sa58} and \cite{Mok78} (see also \cite{KoOl08}).
\begin{definition}
    The \emph{canonical $1$-form} $\theta$ and the \emph{connection form} $\omega$ on $O(M)$ associated to $\overline{\nabla}$ on $M$  are the dual forms to the vector fields (\ref{equation:horizontal}) and (\ref{equation:vertical}) given by
    \begin{equation*}\label{equation:1-form_canonical}
        \theta^k(u) := (ue_l)^k dx^l, \quad \omega^j_i(u) := (ue_k)^j\left(\overline{\Gamma}^k_{lm} (ue_i)^l dx^m + d(ue_i)^k\right).
    \end{equation*}
    The \emph{Sasaki-Mok metric} $\tilde g$ is defined pointwise by
    \begin{equation*}\label{equation:Sasaki-Mok_metric}
        \langle \eta, \xi \rangle_{\tilde g} := \left\langle d\pi_{O(M),u} (\eta) , d\pi_{O(M),u}(\xi) \right\rangle_g + \left\langle \omega_u(\eta), \omega_u(\xi)\right\rangle, 
    \end{equation*}
    where $\langle \cdot, \cdot \rangle$ denotes an $O(d)$-invariant inner product on $\mathfrak{o}(d)$.
\end{definition} 
The global canonical horizontal vector fields $H_i \in \Gamma(\H O(M))$ allow us to define a horizontal Laplacian for the orthonormal frame bundle as a sum of squares:
\begin{definition}
    The \emph{horizontal Laplacian} of $O(M)$ is given by
    \begin{equation}\label{equation:horizontal_laplacian_O(M)}
        \Delta_\H = \sum_{i=1}^d H_i^2.
    \end{equation}
\end{definition}
This operator is in \emph{H\"ormander's form}. In general, Nash's embedding theorem allows one to write the Laplace-Beltrami operator of $M$ as a sum of squares of orthogonal projections (see for instance \cite[Theorem 3.1.4]{Hsu02}) at the cost of extra terms coming from the dimension of the isometric embedding. The horizontal Laplacian and the Laplace-Beltrami operator satisfy the following relation, and this is a starting point in stochastic calculus on manifolds using (anti-)development:
\begin{proposition}\label{theorem:fundamental_relation}
    The following identity holds: \begin{equation}\label{equation:fundamental_relation}
        \Delta_\H \left(f \circ \pi_{O(M)}\right) = \Delta_M f \circ \pi_{O(M)}, \quad f \in C^\infty(M).
    \end{equation}
\end{proposition}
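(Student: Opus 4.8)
The plan is to verify \eqref{equation:fundamental_relation} by a direct computation in the local chart $(\pi^{-1}(U),(\{x^k\}_{1\le k\le d},\{(ue_i)^j\}))$ on $O(M)$ introduced above, using the explicit coordinate expression \eqref{equation:horizontal} for the canonical horizontal vector fields. The essential observation is that $f\circ\pi_{O(M)}$ depends only on the base coordinates $x^k$ and not on the fibre coordinates $(ue_i)^j$. Hence in the first application of $H_i$ the vertical term in \eqref{equation:horizontal} annihilates $f\circ\pi_{O(M)}$, leaving
\begin{equation*}
    H_i(f\circ\pi_{O(M)})(u) = (ue_i)^j\,\frac{\partial f}{\partial x^j}(\pi_{O(M)}(u)).
\end{equation*}
Crucially, this intermediate function is no longer constant on fibres: it carries the factor $(ue_i)^j$, which is itself a fibre coordinate.

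First I would therefore apply $H_i$ a second time to the function above. The horizontal derivative $\partial/\partial x^a$ now produces the second base derivative, while the vertical derivative $\partial/\partial(ue_l)^m$ acts nontrivially on the coefficient $(ue_i)^j$, giving $\partial (ue_i)^j/\partial(ue_l)^m=\delta_i^l\delta^j_m$. Carrying the indices through \eqref{equation:horizontal} yields
\begin{equation*}
    H_i^2(f\circ\pi_{O(M)})(u) = (ue_i)^a(ue_i)^j\,\frac{\partial^2 f}{\partial x^a\partial x^j} - (ue_i)^a(ue_i)^k\,\overline{\Gamma}^m_{ak}\,\frac{\partial f}{\partial x^m},
\end{equation*}
where the second term is precisely the contribution of the Christoffel symbols sitting inside the vertical part of $H_i$, revived by the second differentiation.

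Finally I would sum over $i=1,\dots,d$ as in \eqref{equation:horizontal_laplacian_O(M)} and invoke the orthonormality of the frame $u$. Since $\{ue_i\}_{1\le i\le d}$ is an orthonormal basis of $T_pM$, the relation $g_{ab}(ue_i)^a(ue_j)^b=\delta_{ij}$ inverts to $\sum_{i=1}^d (ue_i)^a(ue_i)^b = g^{ab}(p)$ at $p=\pi_{O(M)}(u)$. Substituting this into the sum collapses both terms to
\begin{equation*}
    \Delta_\H(f\circ\pi_{O(M)})(u) = g^{aj}\left(\frac{\partial^2 f}{\partial x^a\partial x^j} - \overline{\Gamma}^m_{aj}\,\frac{\partial f}{\partial x^m}\right)(p),
\end{equation*}
which is exactly the coordinate expression $g^{aj}\,\overline{\nabla}_a\overline{\nabla}_j f$ for $\Delta_M f$ at $p$, i.e.\ $(\Delta_M f\circ\pi_{O(M)})(u)$. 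There is no deep obstacle here; the computation is elementary once the geometry is set up. The one point requiring care --- and the conceptual heart of the identity --- is that the connection term of $\Delta_M$ is generated entirely by the vertical part of $H_i$ acting, on the second pass, on the frame-dependent coefficient $(ue_i)^j$, and that summing over the frame directions together with the orthonormality relation is precisely what converts the sum of squares into the metric contraction $g^{ij}$.
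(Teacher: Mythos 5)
Your computation is correct: the vanishing of the vertical derivatives on $f\circ\pi_{O(M)}$ at the first pass, the revival of the Christoffel term through $\partial(ue_i)^j/\partial(ue_l)^m=\delta_i^l\delta_m^j$ at the second pass, and the contraction $\sum_{i=1}^d(ue_i)^a(ue_i)^b=g^{ab}$ are exactly the right ingredients, and the result is the coordinate expression of $\Delta_M f$. However, this is a genuinely different route from the paper's. What you have reconstructed is essentially the classical direct computation of \cite[Proposition 3.1.2]{Hsu02}, which the paper explicitly cites and deliberately sets aside: the paper instead \emph{deduces} Proposition \ref{theorem:fundamental_relation} as a corollary of the invariance principle (Theorem \ref{theorem:invariance_frame_bundle}, and Corollary \ref{theorem:lift_of_laplacian} in the general submersion setting). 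There, the horizontally lifted walk converges to the process with generator $\frac12\Delta_\H$ while its projection, a geodesic random walk on $M$, converges to the process with generator $\frac12\Delta_M$; since $\pi(\tilde Z^{(\alpha)})=Z^{(\alpha)}$ in distribution and $\pi$ is continuous, the limiting generators must be $\pi$-related, which is precisely \eqref{equation:fundamental_relation}. Your approach buys elementarity and locality: it needs no probabilistic input, no assumptions on increment distributions, no completeness of $M$, and it works verbatim for any $f\in C^\infty(M)$ without the support-restriction and partition-of-unity patches the paper needs in its corollary. The paper's approach buys generality and conceptual content: it is the whole point of the article that \eqref{equation:fundamental_relation} drops out of $\pi$-relatedness of scaling limits, and the same argument covers arbitrary Riemannian submersions with totally geodesic fibers and bracket-generating horizontal distribution, where no global frame $\{H_i\}$ and hence no sum-of-squares coordinate computation of the form \eqref{equation:horizontal_laplacian_O(M)} is available. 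One small point you should flag in your write-up: on $O(M)$ the fibre quantities $(ue_i)^j$ are constrained by orthonormality, so the identity $\partial(ue_i)^j/\partial(ue_l)^m=\delta_i^l\delta_m^j$ is strictly valid on the full frame bundle $GL(M)$, where \eqref{equation:horizontal} still defines the $H_i$ and these fields are tangent to $O(M)$; one computes there and restricts (or, alternatively, works in normal coordinates at $p$ so the Christoffel symbols vanish at the base point). This is standard and does not damage the argument, but as written your differentiation step silently assumes the fibre coordinates are independent.
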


The proofs of Proposition \ref{theorem:fundamental_relation} in \cite[Proposition 3.1.2]{Hsu02} and \cite[Proposition 4.2.7]{BDW22} are more geometric in essence. Here, we deduce this relation as a corollary to the invariance principle of the horizontally lifted geodesic random walks on the orthonormal frame bundle.
\begin{theorem}[Invariance principle on the orthonormal frame bundle]\label{theorem:invariance_frame_bundle}
Let $(M,g)$ be a geodesically complete Riemannian manifold and let $\{\mu_p\}_{p\in M}$ be a distribution of increments on $M$ satisfying Assumption \ref{assumption:unit_variance} and Assumption \ref{assumption:third_moment}. 
Let $\{\tilde{Z}^{(\alpha)}_t, t\geq 0\}$ be the process with generator \eqref{equation:resclaled_generator}.
Then as $\alpha\to 0$, this process converges to horizontal Brownian motion; i.e., the process with generator $\frac12\Delta_\H$.
\end{theorem}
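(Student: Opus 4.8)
The plan is to prove this invariance principle by the standard martingale-problem / semigroup-convergence route: show that the generators $\L_\alpha$ of the lifted walks converge, uniformly on compacts, to $\tfrac12\Delta_\H$ on a suitable core (smooth compactly supported functions $f:N\to\R$), and then invoke a general convergence theorem for Markov processes (Trotter--Kurtz, or the Ethier--Kurtz martingale-problem machinery) together with tightness to upgrade generator convergence to weak convergence of the processes. The heart of the argument is therefore the local Taylor expansion of $\L_\alpha f(u)$ in the scaling parameter $\alpha$.

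\medskip

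First I would fix $u\in N=O(M)$ with $\pi(u)=p$ and expand the integrand $f\bigl(\exp_u(\alpha\tilde v[v,u])\bigr)$ in powers of $\alpha$. The key geometric input, already recalled in the excerpt, is that since $\tilde v=\tilde v[v,u]$ is horizontal and the horizontal lift of a geodesic is a geodesic, the curve $s\mapsto\exp_u(s\tilde v)$ is precisely the horizontal lift of the base geodesic $s\mapsto\exp_p(sv)$, and its initial tangent is $\tilde v$ itself. Writing $v=v^i(ue_i)$ in the frame $u$, the lift decomposes as $\tilde v=v^i H_i(u)$ because $H_i(u)=\widetilde{ue_i}$ and lifting is linear. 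A second-order Taylor expansion along this horizontal geodesic then gives
\begin{equation*}
f\bigl(\exp_u(\alpha\tilde v)\bigr)=f(u)+\alpha\,(\tilde v f)(u)+\tfrac{\alpha^2}{2}\,(\tilde v^2 f)(u)+R_\alpha(u,v),
\end{equation*}
where $\tilde v f=v^i H_i f$ and $\tilde v^2 f=v^iv^j H_iH_j f$ hold because differentiating twice along a geodesic produces no extra Christoffel correction in $N$ (the acceleration $\nabla^N_{\tilde v}\tilde v$ vanishes, so the second derivative of $f$ along the geodesic is exactly the iterated horizontal vector field applied to $f$). Substituting into \eqref{equation:resclaled_generator} and dividing by $\alpha^2$, the constant term cancels, the first-order term integrates to zero by the centering part of Assumption \ref{assumption:unit_variance} ($\int v^i\,\mu_p(dv)=0$), and the second-order term yields
\begin{equation*}
\tfrac12\int_{T_pM} v^iv^j\,\mu_p(dv)\,(H_iH_j f)(u)=\tfrac12\,g^{ij}(p)(H_iH_jf)(u)=\tfrac12\sum_{i=1}^d (H_i^2 f)(u),
\end{equation*}
using the covariance condition $\int v^iv^j\,\mu_p=g^{ij}$ and the fact that in the frame $u$ the metric is diagonalized, $g^{ij}(p)=\delta^{ij}$ in the orthonormal basis $\{ue_i\}$. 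This is exactly $\tfrac12\Delta_\H f(u)$ by \eqref{equation:horizontal_laplacian_O(M)}.

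\medskip

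The main obstacle, and where Assumption \ref{assumption:third_moment} enters, is controlling the remainder $\alpha^{-2}\int R_\alpha(u,v)\,\mu_p(dv)$ and showing it tends to zero \emph{uniformly} for $u$ in compact sets of $N$. Taylor's theorem with the integral form of the remainder bounds $|R_\alpha(u,v)|$ by $\tfrac{\alpha^3}{6}\norm{\tilde v}^3\sup|\partial^3 f|$ along the geodesic segment; since $\norm{\tilde v}=\norm{v}$ by the isometry property of $d\pi$ on horizontal vectors, the third-moment condition $\sup_{p\in K}\int\norm{v}^3\mu_p(dv)<\infty$ makes $\alpha^{-2}\int|R_\alpha|\,\mu_p=O(\alpha)\to0$ uniformly on compacts, provided $f$ has compact support so the relevant third derivatives are uniformly bounded. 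I would make this rigorous by choosing geodesically convex relatively compact neighborhoods and controlling the third covariant derivative of $f$ in terms of $H_iH_jH_k f$; one must check that the geodesic $\exp_u(s\tilde v)$ stays in a fixed compact for small $\alpha$, which follows from geodesic completeness of $N$ (inherited from that of $M$ together with compactness of the fibers $O_p$) and the support of $f$. Having established $\norm{\L_\alpha f-\tfrac12\Delta_\H f}_\infty\to0$ for every $f$ in the core, I would close the argument by noting that $\tfrac12\Delta_\H$ generates a Feller semigroup whose domain contains this core (horizontal Brownian motion is well defined as the Hörmander-form diffusion), and applying the standard Trotter--Kurtz approximation theorem to conclude $\tilde Z^{(\alpha)}\Rightarrow$ horizontal Brownian motion; tightness of the family follows from the uniform generator bound on compacts via the usual compact-containment and maximal-inequality estimates.
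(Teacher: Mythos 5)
Your proposal follows essentially the same route as the paper's proof: a third-order Taylor expansion of $f$ along the horizontal lift of the geodesic $t\mapsto\exp_p(\alpha tv)$, with the centering condition killing the first-order term, the covariance condition producing $\tfrac12\sum_{i=1}^d H_i^2f$ (the paper makes the key mechanism explicit via parallel transport, $\widetilde{\gamma_\alpha}'(t)=\alpha(u^{-1}v)^iH_i(\widetilde{\gamma_\alpha}(t))$, so iterated time derivatives are exactly iterated $H_i$'s with no correction terms --- the same fact you justify via vanishing geodesic acceleration), and Assumption \ref{assumption:third_moment} controlling the remainder uniformly via compactness of $\pi_{O(M)}(\Supp f)$. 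The only difference is presentational: you spell out the Trotter--Kurtz/Ethier--Kurtz step upgrading uniform generator convergence on a core to weak convergence of processes, which the paper leaves implicit.
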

\begin{proof}[Proof of Theorem \ref{theorem:invariance_frame_bundle}]
    Let $f : M \to \R$ be a smooth compactly supported function.
    Given a frame $u$ for $p$, we perform a Taylor's expansion of $f \circ \widetilde{\gamma_\alpha}$, where $\widetilde{\gamma_\alpha}$ is the horizontal lift starting at $u$ of the curve $\gamma_\alpha(t) = \exp_p(\alpha tv)$. There is some $0 < s < 1$ such that
\begin{equation*}
    f \circ \widetilde{\gamma_\alpha}(1) = f(u) + \left.\frac{d}{dt}\right|_{t=0} \left(f \circ \widetilde{\gamma_\alpha}(t)\right) + \frac{1}{2} \left.\frac{d^2}{dt^2}\right|_{t=0} \left(f \circ \widetilde{\gamma_\alpha}(t)\right) + \frac{1}{6} \left.\frac{d^3}{dt^3}\right|_{t=s} \left(f \circ \widetilde{\gamma_\alpha}(t)\right).
\end{equation*}
To compute the time derivatives, first note that the horizontal lift allows us to express
\begin{equation*}
    \gamma_\alpha'(t) = \tau_{\gamma_\alpha; 0t} \left(\alpha v\right) = \widetilde{\gamma_\alpha}(t)\left(\widetilde{\gamma_\alpha}(0)\right)^{-1} (\alpha v) = \alpha\widetilde{\gamma_\alpha}(t) u^{-1}v = \alpha \left(u^{-1} v\right)^i \widetilde{\gamma_\alpha}(t) e_i.
\end{equation*}
Thus
\begin{equation*}
    \frac{d}{dt} \left(f \circ \widetilde{\gamma_\alpha}(t)\right) = df_{\widetilde{\gamma_\alpha}(t)} \circ \widetilde{\gamma_\alpha}'(t) = \alpha\left(u^{-1} v\right)^i H_i f(\widetilde{\gamma_\alpha}(t)),
\end{equation*}
and likewise,
\begin{multline*}
    \frac{d^2}{dt^2} f(\widetilde{\gamma_\alpha}(t)) = \alpha^2\left(u^{-1} v\right)^j \left(u^{-1} v\right)^i H_j H_i f(\widetilde{\gamma_\alpha}(t)), \\
    \frac{d^3}{dt^3} f(\widetilde{\gamma_\alpha}(t)) = \alpha^3\left(u^{-1} v\right)^k\left(u^{-1} v\right)^j \left(u^{-1} v\right)^i H_k H_j H_i f(\widetilde{\gamma_\alpha}(t)).
\end{multline*}
Since $u$ is an orthonormal frame, $\{ue_i\}_{1\le i\le d}$ is an orthonormal basis of $T_pM$, and we get
\begin{equation*}
    v^i = \langle v, ue_i \rangle_g = \langle u^{-1}v, e_i\rangle_{\R^d} = \left(u^{-1} v\right)^i.
\end{equation*}
Recall the rescaled generator (\ref{equation:resclaled_generator}) given by
\begin{multline*}
    \alpha^{-2} \int_{T_pM} f \circ \widetilde\gamma_\alpha(1) - f(u) \: \mu_p(dv) \\ = \alpha^{-2} \int_{T_pM} \left.\frac{d}{dt}\right|_{t=0} \left(f \circ \widetilde{\gamma_\alpha}(t)\right) + \frac{1}{2} \left.\frac{d^2}{dt^2}\right|_{t=0} \left(f \circ \widetilde{\gamma_\alpha}(t)\right) + \frac{1}{6} \left.\frac{d^3}{dt^3}\right|_{t=s} \left(f \circ \widetilde{\gamma_\alpha}(t)\right) \:\mu_p(dv).
\end{multline*}
The first term vanishes by the centering of the collection of measures, and the second one is precisely the horizontal Laplacian. Indeed, under Assumption \ref{assumption:unit_variance}, by the linearity of the integral and the horizontal lift, we get that
\begin{equation*}
    \alpha^{-2} \int_{T_pM} \left.\frac{d}{dt}\right|_{t=0} \left(f \circ \widetilde{\gamma_\alpha}(t)\right) \: \mu_p(dv) =  \alpha^{-1} df_u \circ \left(d\pi_u\right)^{-1} \left( \int_{T_pM} v \:\mu_p(dv)\right) = 0.
\end{equation*}
Moreover, by (\ref{equation:expectation_variance}), we have
\begin{align*}
    \alpha^{-2} \int_{T_pM} \left.\frac{d^2}{dt^2}\right|_{t=0} \left(f \circ \widetilde{\gamma_\alpha}(t)\right) \:\mu_p(dv) &= \int_{T_pM} \left(u^{-1} v\right)^j \left(u^{-1} v\right)^i \:\mu_p(dv) H_j H_i f(u) \\
        &= \int_{T_pM} v^jv^i \:\mu_p(dv) H_j H_i f(u) \\
        &= \sum_{i=1}^d H_i^2 f(u),
\end{align*}
which is the horizontal Laplacian. For the third order term, we conclude using Assumption \ref{assumption:third_moment} as follows: $\pi_{O(M)}\left(\Supp f\right)$ is compact by continuity of the projection, and we estimate
\begin{align*}
    \alpha^{-2}\int_{T_pM} \left.\frac{d^3}{dt^3}\right|_{t=s} \left(f \circ \widetilde{\gamma_\alpha}(t)\right) \:\mu_p(dv) &= \alpha \int_{T_pM} v^kv^jv^i H_k H_j H_i f({\widetilde\gamma_\alpha}(s)) \:\mu_p(dv)  \\
        &\le \alpha\int_{T_pM} \norm{v}^3 \:\mu_p(dv) \sum_{i,j,k=1}^d\norm{H_k H_j H_i f}_\infty \\
        &\le \alpha \sup_{q\in \pi_{O(M)}\left(\Supp f\right)} \int_{T_qM} \norm{v}^3 \:\mu_q(dv) \sum_{i,j,k=1}^d\norm{H_k H_j H_i f}_\infty,
\end{align*}
which goes to $0$ independently of the frame u as $\alpha \to 0$.
\end{proof}

\section{Invariance principle for Riemannian submersions}
In this Section, we extend the results of Section 3 for the orthonormal frame bundle to the more general framework of Riemannian submersions. 

Let $(M, g)$ and $(N, \tilde{g})$ be Riemannian manifolds with Riemannian submersion $\pi : N \to M$, and let $d$ be the dimension of $M$. Each vector field $X \in \Gamma(TN)$ can be decomposed uniquely into its horizontal part $X_\H \in \H$ and vertical part $X_\V \in \V$ respectively. Under this setting, we consider the Laplace-Beltrami operator $\Delta_N$ on $N$, or even its horizontal and vertical parts as follows:
\begin{definition}
    The \emph{horizontal Laplacian} $\Delta_\H$ is the generator of the pre-Dirichlet form
    \begin{equation*}
        \mathcal{E}_\H(f, h) = -\int_E \langle (\grad f)_\H, (\grad h)_\H\rangle_{\tilde{g}} \: d\mathrm{Vol}_{\tilde{g}}, \quad f, h \in C^\infty_c(N).
    \end{equation*}
    In local orthonormal frames $\{E_i\}_{1\le i\le d}$ of $\H$ and $\{F_j\}_{1\le j \le l}$ of $\V$, this operator can be rewritten as
    \begin{equation}\label{equation:full_horizontal_laplacian}
        \Delta_\H = \sum_{i=1}^d \left(E_i^2 - \left(\nabla_{E_i} E_i\right)_\H\right) - \sum_{j=1}^l \left(\nabla_{F_j} F_j\right)_\H.
    \end{equation}
    Analogously, the \emph{vertical laplacian} $\Delta_\V$ is the vertical part of the Laplace-Beltrami operator on $E$, and
    \begin{equation*}
        \Delta_N = \Delta_\H + \Delta_\V.
    \end{equation*}
\end{definition}
Of course, since the $F_j$'s are vertical, they cannot possibly be obtained as horizontal lifts. The last term in the horizontal Laplacian (\ref{equation:full_horizontal_laplacian}) should thus vanish in order to obtain convergence of the generator of horizontally lifted geodesic random walks towards this operator. The following type of Riemannian submersion ensures that the last term indeed vanishes; $ \left(\nabla_{F_j} F_j\right)_\H = 0$ (see \cite{ON66} and \cite[Proposition 4.13]{ON83}).
\begin{definition}
    The fibers $N_p = \pi^{-1}(\{p\})$ of a Riemannian submersion $\pi : N \to M$ are said to be \emph{totally geodesic} if any geodesic in a fiber, seen as a submanifold of $N$ with the induced metric, is also a geodesic in $N$.
\end{definition}
Assuming that the submersion has totally geodesic fibers, the horizontal Laplacian (\ref{equation:full_horizontal_laplacian}) takes the form
\begin{equation}\label{equation:horizontal_laplacian_submersion}
    \Delta_\H = \sum_{i=1}^d \left(E_i^2 - \left(\nabla_{E_i} E_i\right)_\H\right).
\end{equation}
This operator is generally not in \emph{H\"ormander's form} as in the special case (\ref{equation:horizontal_laplacian_O(M)}) of the orthonormal frame bundle which is a parallelizable manifold.
H\"ormander's theorem allows us to check the subellipticity of $\Delta_\H$ on the horizontal distribution $\H$.
\begin{definition}
    A distribution $\Lambda$ of the tangent bundle $TN$ is said to be \emph{bracket-generating} if it is generated by a finite number of Lie bracket of vector fields in $\Gamma(\Lambda)$.
\end{definition}
Whenever the horizontal subbundle $\H N$ of $TN$ is bracket-generating, the subellipticity of $\Delta_\H$ is guaranteed by H\"ormander's theorem. Moreover, \cite[Proposition 4.1.5]{BDW22} guarantees in that case its self-adjointness on $C_c^\infty(N)$, and its associated pre-Dirichlet form has a unique closed extension. On the other hand, as $\V$ is never bracket generating, we will not consider vertically lifted geodesic random walks.

We are now ready to state the invariance principle for the horizontal lift of the rescaled continuous-time random walk  for these types of Riemannian submersions. As a corollary, we obtain the associated relation between the Laplace-Beltrami operator and the horizontal Laplacian.
\begin{theorem}[Invariance principle for Riemannian submersions]\label{theorem:invariance_submersion}
    Let $(M,g)$ and $(N,\tilde g)$ be geodesically complete Riemannian manifolds equipped with a Riemannian submersion $\pi : N \to M$ with totally geodesic fibers such that the horizontal subbundle $\H N$ of $TN$ is bracket-generating. Let $\{\mu_p\}_{p\in M}$ be a distribution of increments on $M$ satisfying Assumption \ref{assumption:unit_variance} and Assumption \ref{assumption:third_moment}. 
    Let $\{\tilde{Z}^{(\alpha)}_t, t\geq 0\}$ be the process with generator \eqref{equation:resclaled_generator}.
    Then as $\alpha\to 0$, this process converges to horizontal Brownian motion; i.e., the process with generator $\frac{1}{2}\Delta_{\H}$.
\end{theorem}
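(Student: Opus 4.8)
The plan is to follow the strategy of the proof of Theorem \ref{theorem:invariance_frame_bundle}, namely to establish uniform convergence of the generators $\L_\alpha f \to \tfrac12\Delta_\H f$ on a core and then invoke a standard semigroup convergence theorem. The only genuinely new feature is that $N$ need not be parallelizable, so the globally defined canonical horizontal fields $H_i$ are no longer available; I would replace them by a \emph{local} orthonormal frame $\{E_i\}_{1\le i\le d}$ of $\H N$ chosen so that $\{d\pi(E_i)\}$ is orthonormal in $TM$, and carry out the Taylor expansion intrinsically using the geodesic equation rather than a sum of squares.

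First I would fix $f \in C_c^\infty(N)$ and $u \in N$ with $p = \pi(u)$, and for each $v$ consider $\gamma_\alpha(t) = \exp_p(\alpha t v)$ together with its horizontal lift $\widetilde{\gamma_\alpha}$ starting at $u$. Since the horizontal lift of a geodesic is again a geodesic in $N$, we have $\widetilde{\gamma_\alpha}(1) = \exp_u(\alpha\tilde v[v,u])$, which is exactly the point appearing in (\ref{equation:resclaled_generator}). Taylor expanding $f\circ\widetilde{\gamma_\alpha}$ to third order and using $\nabla_{\widetilde{\gamma_\alpha}'}\widetilde{\gamma_\alpha}' = 0$, the first and second time derivatives at $t=0$ reduce to $df$ and $\Hess f$ contracted with the velocity $\widetilde{\gamma_\alpha}'(0)=\alpha\tilde v$:
\begin{equation*}
\left.\frac{d}{dt}\right|_{0}(f\circ\widetilde{\gamma_\alpha}) = \alpha\, df(\tilde v), \qquad \left.\frac{d^2}{dt^2}\right|_{0}(f\circ\widetilde{\gamma_\alpha}) = \alpha^2\,\Hess f(\tilde v,\tilde v),
\end{equation*}
the geodesic property killing the term $df(\nabla_{\widetilde{\gamma_\alpha}'}\widetilde{\gamma_\alpha}')$. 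The first-order term integrates to zero by the centering in Assumption \ref{assumption:unit_variance}. For the second-order term I would write $\tilde v = v^i E_i$ with $v^i = \langle v, d\pi(E_i)\rangle_g$, using that $d\pi$ is an isometry on $\H N$; after integrating and using the covariance $\int v^iv^j\,\mu_p(dv) = \delta^{ij}$ (the components of $g^{-1}$ in the orthonormal frame $\{d\pi(E_i)\}$), this gives $\tfrac12\sum_i \Hess f(E_i, E_i)(u) = \tfrac12\sum_i\big(E_i^2 f - (\nabla_{E_i}E_i) f\big)(u)$, a horizontal trace of the Hessian that is frame-independent and hence defines a global operator.

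The key geometric step is to identify this with $\tfrac12\Delta_\H f$. Decomposing $\nabla_{E_i}E_i = (\nabla_{E_i}E_i)_\H + (\nabla_{E_i}E_i)_\V$, the vertical part is $(\nabla_{E_i}E_i)_\V = A_{E_i}E_i$ for O'Neill's tensor $A$; since $A$ is alternating on horizontal vectors, $A_{E_i}E_i = \tfrac12(\V[E_i,E_i]) = 0$. Hence $\sum_i(\nabla_{E_i}E_i)f = \sum_i(\nabla_{E_i}E_i)_\H f$, and since totally geodesic fibers make the term $\sum_j(\nabla_{F_j}F_j)_\H$ in (\ref{equation:full_horizontal_laplacian}) vanish, the second-order term is exactly $\tfrac12\Delta_\H f$ with $\Delta_\H$ as in (\ref{equation:horizontal_laplacian_submersion}). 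The third-order remainder is handled as in Theorem \ref{theorem:invariance_frame_bundle}: along the geodesic it equals $\alpha^3(\nabla^3 f)(\widetilde{\gamma_\alpha}',\widetilde{\gamma_\alpha}',\widetilde{\gamma_\alpha}')$, so after multiplying by $\alpha^{-2}$ it is bounded by $\tfrac{\alpha}{6}\norm{\nabla^3 f}_\infty \sup_{q}\int_{T_qM}\norm{v}^3\,\mu_q(dv)$, which tends to $0$ uniformly by Assumption \ref{assumption:third_moment} and the compactness of $\pi(\Supp f)$. This yields $\norm{\L_\alpha f - \tfrac12\Delta_\H f}_\infty \to 0$ for all $f\in C_c^\infty(N)$.

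Finally, I would promote this uniform convergence of generators to weak convergence of processes. Because $\H N$ is bracket-generating, Hörmander's theorem makes $\Delta_\H$ hypoelliptic and, via \cite[Proposition 4.1.5]{BDW22}, essentially self-adjoint on $C_c^\infty(N)$; thus $\tfrac12\Delta_\H$ generates a Feller semigroup for which $C_c^\infty(N)$ is a core. The uniform convergence on this core then yields weak convergence of the associated Markov processes by the Trotter--Kurtz semigroup convergence theorem. I expect the main obstacle to be the second step: in the absence of a parallel global frame one must argue intrinsically and verify that precisely the vertical corrections disappear, which is where the totally geodesic hypothesis and the antisymmetry of O'Neill's $A$-tensor enter. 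Matching the resulting connection-corrected operator with the intrinsically defined horizontal Laplacian $\Delta_\H$ is the crux of the argument.
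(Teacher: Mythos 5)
Your proposal is correct and takes essentially the same route as the paper: a third-order Taylor expansion of $f\circ\widetilde{\gamma_\alpha}$ along the lifted geodesic (using that horizontal lifts of geodesics are geodesics), with the centering, covariance and third-moment assumptions handling the three orders, uniformity in $u$ coming from compactness of the support, and the totally geodesic and bracket-generating hypotheses entering exactly where you place them. The only difference is bookkeeping in the second-order term: the paper differentiates $\left\langle(\grad f)_\H,\widetilde{\gamma_\alpha}'(t)\right\rangle_{\tilde g}$, so the connection correction appears already projected as $(\nabla_{E_i}E_j)_\H$ and O'Neill's $A$-tensor is never needed (and it likewise bounds the remainder via $\nabla\Hess_\H f$ rather than $\nabla^3 f$), whereas you trace the full Hessian and then discard the vertical part via the antisymmetry $A_{E_i}E_i=0$ — an equivalent, equally valid computation.
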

\begin{corollary}\label{theorem:lift_of_laplacian}
    Let $\pi : N \to M$ be a Riemannian submersion with totally geodesic fibers, and assume the horizontal distribution $\H$ to be bracket-generating. Then the following identity hold:
    \begin{equation}\label{equation:horizontal_laplacian_equality_submersion}
        \Delta_\H\left(f \circ \pi\right) = \Delta_M f \circ \pi, \quad f \in C^\infty(M).
    \end{equation}
\end{corollary}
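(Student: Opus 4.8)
The plan is to deduce the identity from the invariance principle (Theorem \ref{theorem:invariance_submersion}) by exploiting an \emph{exact} intertwining between the approximating generator $\L_\alpha$ on $N$ and the base generator $L^{(\alpha)}$ on $M$, valid for every $\alpha>0$, and then letting $\alpha\to 0$. The key geometric input is that the horizontal lift of a geodesic is again a geodesic that projects onto the original one, so the action of $\L_\alpha$ on a pulled-back function $f\circ\pi$ is entirely governed by the base walk.

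First I would establish the intertwining relation
\begin{equation*}
    \L_\alpha(f\circ\pi) = \left(L^{(\alpha)}f\right)\circ\pi, \qquad f\in C^\infty(M),\ \alpha>0.
\end{equation*}
To see this, fix $u\in N$ and set $p=\pi(u)$. The curve $t\mapsto\exp_u(\alpha t\,\tilde v[v,u])$ is the geodesic in $N$ with initial horizontal velocity $\alpha\tilde v[v,u]$; since the horizontal lift of a geodesic is a geodesic and $\pi\circ\widetilde{\gamma}=\gamma$, it is precisely the horizontal lift of $\gamma_\alpha(t)=\exp_p(\alpha t v)$ starting at $u$. Projecting at $t=1$ gives
\begin{equation*}
    \pi\left(\exp_u(\alpha\tilde v[v,u])\right) = \exp_p(\alpha v).
\end{equation*}
Substituting this into the definition \eqref{equation:resclaled_generator} of $\L_\alpha$ and using that $\mu_{\pi(u)}=\mu_p$ depends only on the base point, the integral collapses to $\alpha^{-2}\int_{T_pM} f(\exp_p(\alpha v))-f(p)\,\mu_p(dv)=L^{(\alpha)}f(p)$, which is the claimed relation.

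Next I would pass to the limit $\alpha\to 0$ on both sides. On the right-hand side, the standard Taylor-expansion argument for geodesic random walks on $M$ (the same computation underlying Theorem \ref{theorem:invariance_frame_bundle}, now carried out directly on the base manifold) shows, under Assumptions \ref{assumption:unit_variance} and \ref{assumption:third_moment}, that $L^{(\alpha)}f\to\tfrac12\Delta_M f$ pointwise, whence $(L^{(\alpha)}f)\circ\pi\to\tfrac12(\Delta_M f)\circ\pi$. On the left-hand side, the generator convergence established in the proof of Theorem \ref{theorem:invariance_submersion} gives $\L_\alpha h\to\tfrac12\Delta_\H h$; applying this with $h=f\circ\pi$ yields $\L_\alpha(f\circ\pi)\to\tfrac12\Delta_\H(f\circ\pi)$. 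Equating the two limits and cancelling the factor $\tfrac12$ gives \eqref{equation:horizontal_laplacian_equality_submersion}.

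The hard part will be justifying that the convergence $\L_\alpha h\to\tfrac12\Delta_\H h$ applies to $h=f\circ\pi$, since $f\circ\pi$ need not be compactly supported when the fibers are non-compact, whereas Theorem \ref{theorem:invariance_submersion} is stated for $C_c^\infty(N)$. I would resolve this by localizing: both sides of the asserted identity are local differential operators, so it suffices to verify it pointwise at an arbitrary $u$, where one may replace $f\circ\pi$ by $\chi\cdot(f\circ\pi)$ for a cutoff $\chi\in C_c^\infty(N)$ equal to $1$ near $u$. Equivalently, one checks that the third-order Taylor remainder in $\L_\alpha(f\circ\pi)(u)$ is controlled, for fixed $u$, by $\alpha\int_{T_pM}\norm{v}^3\,\mu_p(dv)$ times a bound on the horizontal third derivatives of $f\circ\pi$ along the short geodesic segment—both finite by Assumption \ref{assumption:third_moment} and smoothness—so the pointwise limit holds at every $u$ regardless of compact support.
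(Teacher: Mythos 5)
Your route is genuinely different from the paper's. The paper argues at the level of processes: Theorem \ref{theorem:invariance_submersion} is applied twice (once with $\Id_M : M \to M$ as submersion, once with $\pi$), the lifted walk and the base walk converge to horizontal Brownian motion on $N$ and Brownian motion on $M$ respectively, and since the projection of the lifted walk \emph{is} the base walk in distribution and $\pi$ is continuous, the limiting Markov processes are $\pi$-related, whence so are their generators. Your proof de-probabilizes this: the exact intertwining $\L_\alpha(f\circ\pi)=\left(L^{(\alpha)}f\right)\circ\pi$, which you derive correctly from $\pi\left(\exp_u(\alpha\tilde v[v,u])\right)=\exp_p(\alpha v)$ (horizontal lifts of geodesics are geodesics, plus geodesic completeness), replaces the weak-convergence step by an algebraic identity valid for every $\alpha>0$, after which the corollary follows by equating generator limits. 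This is arguably tighter than the paper's argument, since it avoids the inference that $\pi$-related limiting processes have $\pi$-related generators, a step the paper states rather quickly and which implicitly requires passing from intertwined semigroups to intertwined generators on a core. As in the paper, the bracket-generating hypothesis plays no role in the identity itself.

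There is, however, a genuine gap in your final paragraph. First, the replacement of $f\circ\pi$ by $\chi\cdot(f\circ\pi)$ is not innocent: $\L_\alpha$ is a \emph{nonlocal} operator, and since $\mu_p$ may have unbounded support, the increment leaves the set $\{\chi = 1\}$ with positive probability for every fixed $\alpha>0$. The replacement error can be controlled — it is $O(\alpha)$ via the tail bound $\mu_p\left(\norm{v}\ge \delta/\alpha\right)\le(\alpha/\delta)^3\int_{T_pM}\norm{v}^3\,\mu_p(dv)$ — but only when $f$ is \emph{bounded}, which a general $f\in C^\infty(M)$ is not. Second, your ``equivalently'' clause is false as stated: the segment $t\mapsto\exp_u(\alpha t\tilde v[v,u])$, $t\in[0,1]$, has length $\alpha\norm{v}_g$, which is unbounded as $v$ ranges over the support of $\mu_p$, so there is no ``bound on the horizontal third derivatives of $f\circ\pi$ along the short geodesic segment'' available from smoothness alone. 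Indeed, for $M=N=\R$, $f(x)=x^4$ and $\mu$ with finite third but infinite fourth moment, $\L_\alpha(f\circ\pi)(u)=L^{(\alpha)}f(p)=+\infty$ for every $\alpha$, so the claimed pointwise limit fails outright. The repair is short but must be made: since \eqref{equation:horizontal_laplacian_equality_submersion} is local \emph{on the base} and independent of the choice of $\{\mu_p\}$, first replace $f$ by $\bar\chi f$ with $\bar\chi\in C^\infty_c(M)$ equal to $1$ near $p=\pi(u)$ (localize on $M$, not on $N$); then either run your cutoff argument with the Markov-inequality tail estimate above, or — more in the spirit of your intertwining — observe that $(f\circ\pi)\circ\widetilde{\gamma_\alpha}=f\circ\gamma_\alpha$, so the third-order Taylor remainder on $N$ coincides with the remainder on $M$ and is bounded by $\alpha\,C\int_{T_pM}\norm{v}^3\,\mu_p(dv)$ with $C=\sup_{\Supp f}\abs{\nabla\Hess f}$ finite by compactness of $\Supp f\subset M$. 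Alternatively you may do what the paper does: restrict the increment measures to ones supported in the unit ball of each $T_pM$, which is legitimate because the identity to be proved does not depend on the increments, and then conclude with a partition of unity.
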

Before proving Theorem \ref{theorem:invariance_submersion}, let us go through some simple examples from \cite[Sections 4.1, 4.4]{BDW22} where the restrictions on the Riemannian submersion, namely, that the fibers are totally geodesic and that the horizontal distribution is bracket-generating, are verified.
\begin{itemize}
    \item The manifold $(M, g)$ itself, with $\Id_M : M \to M$ as submersion. The horizontal distribution is the whole tangent space. Theorem \ref{theorem:invariance_submersion} gives then a short proof of the invariance principle for geodesic random walks on Riemannian manifolds.
    \item The tangent bundle $\pi_{TM} : TM \to M$ equipped with the Sasaki metric \cite{Sa58} defined in terms of coordinates $(\{x_i\}_{1\le i\le d}, \{y_j\}_{1\le j\le d})$ about $(p, v)$ in $TM$ by
    \begin{equation*}
        ds^2 = g_{ij}dx^i dx^j + g_{ij}Dy^i Dy^j, 
    \end{equation*}
    where $D$ denotes the covariant differential with respect to $\overline{\nabla}$ on $M$; $Dy^k = dy^k + \overline{\Gamma}^k_{ij}y^i dx^j$.
    \item The orthonormal frame bundle $\pi_{O(M)} : O(M) \to M$ from Section 3.
    \item A general class of spaces on which such invariance principle holds are the principal bundles $\pi_P : P \to M$ with fiber Lie group $G$. Given a $G$-compatible connection form $\omega$ and a $G$-invariant metric $b$ on $G$, there is a unique metric $\tilde{g} = \pi^*g + b\omega$ on $P$ that makes $\pi_P$ into a Riemannian submersion with totally geodesic fibers such that the horizontal distribution $\H$ of $\omega$ is the orthogonal complement of the vertical distribution \cite[Theorem 3.5]{Vil70}. Whenever the horizontal distribution $\H$ is bracket-generating, the subellipticity of $\Delta_\H$ is guaranteed and there is a unique closed extension of its associated pre-Dirichlet form. The previous examples fall under this category.
\end{itemize}
The proof of Theorem \ref{theorem:invariance_submersion} is of course similar to the special case of Theorem \ref{theorem:invariance_frame_bundle} for the orthonormal frame bundle $O(M)$ presented in Section 3. Nevertheless, we do not have anymore the global canonical horizontal vector fields (\ref{equation:fundamental_horizontal}).
\begin{proof}[Proof of Theorem \ref{theorem:invariance_submersion}]
    Let $f : N \to \R$ be a smooth compactly supported function. We perform a Taylor
    s expansion of $f \circ \widetilde{\gamma_\alpha}$ around $u \in N_p$, where $\widetilde{\gamma_\alpha}$ is the horizontal lift starting at $u$ of the curve $\gamma_\alpha(t) = \exp_p(\alpha tv)$. There is some $0 < s < 1$ such that
    \begin{equation*}
        f \circ \widetilde{\gamma_\alpha}(1) = f(u) + \left.\frac{d}{dt}\right|_{t=0} \left(f \circ \widetilde{\gamma_\alpha}(t)\right) + \frac{1}{2} \left.\frac{d^2}{dt^2}\right|_{t=0} \left(f \circ \widetilde{\gamma_\alpha}(t)\right) + \frac{1}{6} \left.\frac{d^3}{dt^3}\right|_{t=s} \left(f \circ \widetilde{\gamma_\alpha}(t)\right).
    \end{equation*}
    The first time derivative is given by
    \begin{equation*}
        \frac{d}{dt} f(\widetilde{\gamma_\alpha}(t)) = df_{\widetilde{\gamma_\alpha}(t)} \circ \left(d\pi_{\widetilde{\gamma_\alpha}(t)}\right)^{-1}(\gamma_\alpha'(t)) = \left\langle \grad f, \widetilde{\gamma_\alpha}'(t) \right\rangle_{\tilde g} = \left\langle \left(\grad f\right)_\H, \widetilde{\gamma_\alpha}'(t) \right\rangle_{\tilde g},
    \end{equation*}
    where we used the fact that $\widetilde{\gamma_\alpha}$ is horizontal for the last equality. To obtain the second time derivative, we use the Levi-Civita connection $\nabla$ on $N$ and use the fact that $\widetilde{\gamma_\alpha}$ is a geodesic being the horizontal lift of a geodesic under a Riemannian submersion;
    \begin{align*}
        \frac{d^2}{dt^2} f(\widetilde{\gamma_\alpha}(t)) &= \left\langle \nabla_{\widetilde{\gamma_\alpha}'(t)} (\grad f)_\H, \widetilde{\gamma_\alpha}'(t)\right\rangle_{\tilde{g}} + \left\langle (\grad f)_\H, \nabla_{\widetilde{\gamma_\alpha}'(t)}  \widetilde{\gamma_\alpha}'(t)\right\rangle_{\tilde{g}} \\
        &= \left\langle \nabla_{\widetilde{\gamma_\alpha}'(t)} (\grad f)_\H, \widetilde{\gamma_\alpha}'(t)\right\rangle_{\tilde{g}}.
    \end{align*}
    In particular, at time $t = 0$, consider the orthonormal basis $\{E_i\}_{1\le i\le d}$ of $T_uN$ defined as the horizontal lift of an orthonormal basis $\{\overline{E}_i\}_{1\le i\le d}$ of $T_pM$. Write $v = v^i\overline{E}_i \in T_pM$. By linearity of the horizontal lift, we get $\widetilde{\gamma_\alpha}'(0) = \alpha v^iE_i$, and thus
    \begin{equation*}
        \left.\frac{d^2}{dt^2}\right|_{t=0} \left(f \circ \widetilde{\gamma_\alpha}(t)\right) = \alpha^2 v^iv^j \left\langle\nabla_{E_i} (\grad f)_\H, E_j\right\rangle_{\tilde{g}} = \alpha^2 v^iv^j \left(E_iE_j - \left(\nabla_{E_i} E_j\right)_\H\right) f.
    \end{equation*}
    By Assumption \ref{assumption:unit_variance} on the first and second moments, we deduce that
    \begin{equation*}
        \alpha^{-2}\int_{T_pM} \left.\frac{d}{dt}\right|_{t=0} \left(f \circ \widetilde{\gamma_\alpha}(t)\right) \: \mu_p(dv) = \alpha^{-1} df_u \circ \left(d\pi_u\right)^{-1} \left(\int_{T_pM}v \: \mu_p(dv)\right) = 0,
    \end{equation*}
    and
    \begin{align*}
        \alpha^{-2} \int_{T_pM} \left.\frac{d^2}{dt^2}\right|_{t=0} \left(f \circ \widetilde{\gamma_\alpha}(t)\right) \mu_p(dv) &= \int_{T_pM} v^iv^j \mu_p(dv) \left(E_iE_j - \left(\nabla_{E_i}E_j\right)_\H\right) f \\
        &=\sum_{i=1}^d \left(E_i^2 - \left(\nabla_{E_i}E_i\right)_\H\right) f.
    \end{align*}
    The last term is the horizontal Laplacian (\ref{equation:horizontal_laplacian_submersion}) for a submersion with totally geodesic fibers.

    For the third time derivative, first define the horizontal Hessian
    \begin{equation*}
        \Hess_\H f(Y, Z) = \left\langle \nabla_Y \left(\grad f\right)_\H, Z\right\rangle_{\tilde g}, \quad Y, Z \in \Gamma(TN),
    \end{equation*}
    which is a symmetric covariant tensor of order 2. Its covariant derivative is thus the tensor given by
    \begin{equation*}
        \nabla\Hess_\H f(X, Y, Z) = X\left(\Hess_\H f(Y, Z)\right) - \Hess_\H f(\nabla_X Y, Z) - \Hess_\H f(Y, \nabla_X Z), \quad X, Y, Z \in \Gamma(TN).
    \end{equation*}
    Note that, again since $\widetilde{\gamma_\alpha}$ is a geodesic,
    \begin{align*}
        \frac{d^3}{dt^3} f(\widetilde{\gamma_\alpha}(t)) &= \nabla_{\widetilde{\gamma_\alpha}'(t)} \left\langle \nabla_{\widetilde{\gamma_\alpha}'(t)}\left(\grad f\right)_\H, \widetilde{\gamma_\alpha}'(t)\right\rangle_{\tilde g} \\
        &= \nabla_{\widetilde{\gamma_\alpha}'(t)} \left\langle \nabla_{\widetilde{\gamma_\alpha}'(t)}\left(\grad f\right)_\H, \widetilde{\gamma_\alpha}'(t)\right\rangle_{\tilde g} - 2\left\langle \nabla_{\nabla_{\widetilde{\gamma_\alpha}'(t)} \widetilde{\gamma_\alpha}'(t)}\left(\grad f\right)_\H, \widetilde{\gamma_\alpha}'(t) \right\rangle_{\tilde g} \\
        &= \nabla \Hess_\H f\left(\widetilde{\gamma_\alpha}'(t), \widetilde{\gamma_\alpha}'(t), \widetilde{\gamma_\alpha}'(t)\right).
    \end{align*}
    Locally, $\nabla \Hess_\H f_u : T^3_uN \to \R$ is a bounded operator being linear on a finite dimensional vector space with operator norm given by
    \begin{equation*}
        C(u) = \max_{(\eta, \xi, \zeta) \in T^3_uN \: : \:\norm{\eta}_{\tilde g}\norm{\xi}_{\tilde g}\norm{\zeta}_{\tilde g} = 1} \left|\nabla \Hess_\H f_u(\eta, \xi, \zeta)\right|.
    \end{equation*}
    This constant $C(u)$ can be uniformly bounded since $|\nabla \Hess_\H f| : N \times T^3N \to \R$ is a continuous map on the compact set $\{(q,(\eta, \xi, \zeta)) \: : \: q \in \Supp f, (\eta, \xi, \zeta) \in T^3_qN, \norm{\eta}_{\tilde g}\norm{\xi}_{\tilde g}\norm{\zeta}_{\tilde g} = 1\}$, and hence attains a maximum $C > 0$. Since $\norm{\widetilde{\gamma_\alpha}'(t)}_{\tilde{g}} = \norm{\widetilde{\gamma_\alpha}'(0)}_{\tilde{g}} = \alpha\norm{v}_{g}$, we are able to conclude by Assumption \ref{assumption:third_moment} on the third moment;
    \begin{align*}
        \alpha^{-2}\int_{T_pM} \left.\frac{d^3}{dt^3}\right|_{t=s} \left(f \circ \widetilde{\gamma_\alpha}(t)\right) \mu_p(dv) &= \alpha^{-2} \int_{T_pM} \nabla \Hess_\H f\left(\widetilde{\gamma_\alpha}'(s), \widetilde{\gamma_\alpha}'(s), \widetilde{\gamma_\alpha}'(s)\right) \mu_p(dv) \\
        &\le \alpha C\sup_{q \in \pi\left(\Supp f\right)} \int_{T_qM} \norm{v}_g^3 \: \mu_q(dv),
    \end{align*}
    which goes to $0$ independently of $u$ as $\alpha \to 0$.
\end{proof}
\begin{proof}[Proof of Corollary \ref{theorem:lift_of_laplacian}]
    Consider an $\alpha$-rescaled continuous-time random walk on $M$ that satisfies Assumption \ref{assumption:unit_variance} and Assumption \ref{assumption:third_moment}. By Theorem \ref{theorem:invariance_submersion} with $\Id_M : M \to M$ as submersion, this process converges to Brownian motion; i.e., the process with generator $\frac{1}{2}\Delta_M$. On the other hand, by Theorem \ref{theorem:invariance_submersion}, the horizontal lift of the $\alpha$-rescaled random walk converges to horizontal Brownian motion; i.e., the process with generator $\frac{1}{2}\Delta_\H$ on $N$. Since both processes are Markov, and since the projection $\pi : N \to M$ is continuous, the corresponding generators must be $\pi$-related. This proves identity $(\ref{equation:horizontal_laplacian_equality_submersion})$ for the case of geodesically complete Riemannian manifolds and smooth compactly supported functions. The general case follows by restricting the diameter of the support of the collection of measures $\{\mu_p\}$ to, say, the unit ball, so that geodesics do not have arbitrarily large velocities. To extend beyond smooth compactly supported functions, a partition of unity argument concludes.
\end{proof}

\begin{remark}
Of course, Corollary \ref{theorem:fundamental_relation} is a special case of Corollary \ref{theorem:lift_of_laplacian}. Here, we proposed another approach than the classical one that we briefly outline for the sake of completeness. Essentially, the proof reduces to showing that the Levi-Civita connection $\nabla$ on $N$ is $\pi$-related to the Levi-Civita connection $\overline{\nabla}$ on $M$ (see \cite[Lemma 1]{ON66}). This follows from the fact that both the inner products for the specific metrics and the Lie brackets preserve $\pi$-relations;
\begin{equation*}
    \left\langle X, W \right\rangle_{\tilde g} = \left\langle \overline{X}, \overline{W}\right\rangle_g \circ \pi, \quad d\pi\left(\left[Y, Z\right]\right) = \left[\overline{Y}, \overline{Z}\right] \circ \pi
\end{equation*}
for $\pi$-related vector fields $W, X, Y, Z \in \Gamma(\H N)$ to $\overline{W}, \overline{X}, \overline{Y}, \overline{Z} \in \Gamma(TM)$, and hence
\begin{equation*}
    \left\langle X, [Y, Z] \right\rangle_{\tilde g} = \left\langle \overline{X}, [\overline{Y}, \overline{Z}] \right\rangle_g \circ \pi.
\end{equation*}
It remains to express the Levi-Civita connection $\nabla$ on $N$ via Koszul's formula for any triple $X, Y, Z \in \Gamma(TN)$;
\begin{equation*}
    2\left\langle \nabla_X Y,Z\right\rangle_{\tilde g} = X\langle Y,Z\rangle_{\tilde g}  + Y\langle X, Z\rangle_{\tilde g} - Z\langle X,Y\rangle_{\tilde g} -\left\langle X, [Y,Z]\right\rangle_{\tilde g} -\left\langle Y, [X, Z]\right\rangle_{\tilde g} + \left \langle Z, [X, Y]\right\rangle_{\tilde g} .
\end{equation*}
\end{remark}

{\bf Acknowledgment:} This publication is part of the project Interacting particle systems and Riemannian geometry (with project number OCENW.M20.251) of the research program Open Competitie ENW which is (partly) financed by the Dutch Research Council (NWO)\footnote{\includegraphics[height=2cm, width=5cm]{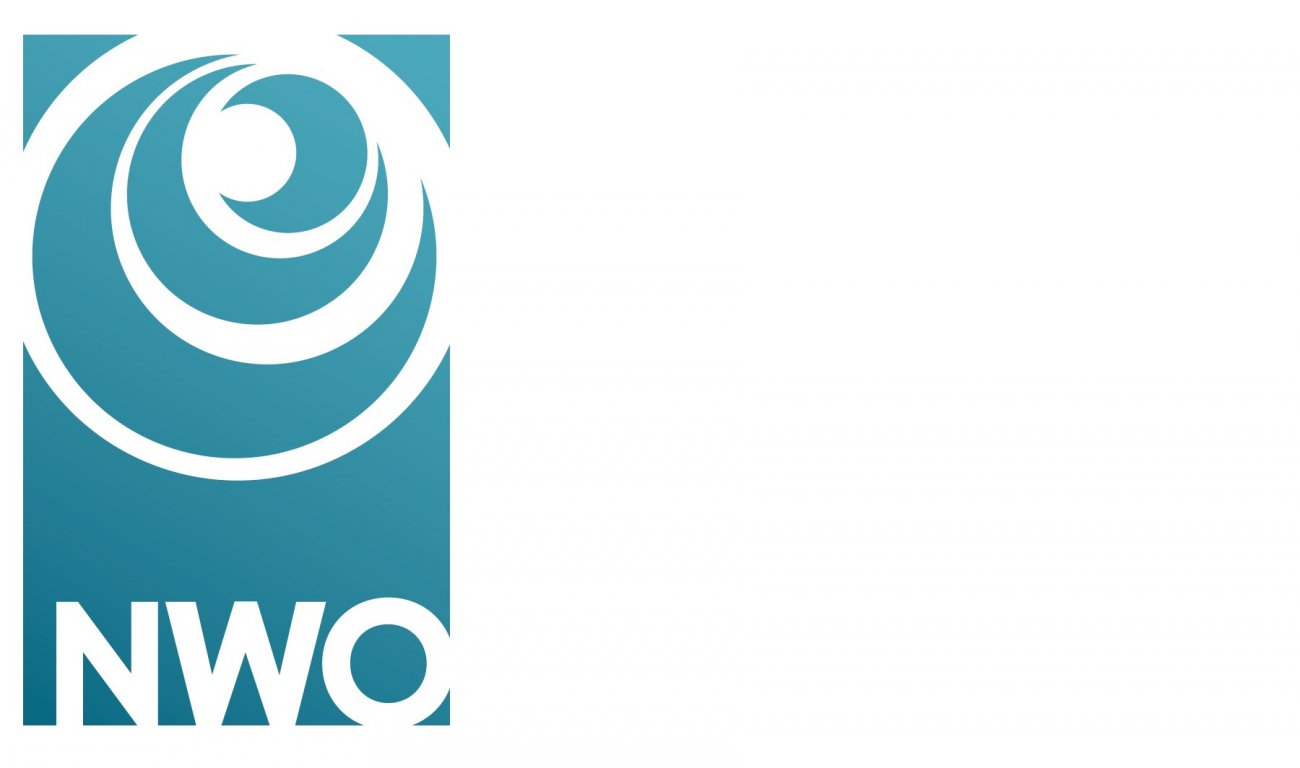}}.

\printbibliography
\end{document}